\newcommand{\ep}{\varepsilon}
\newcommand{\from}{\leftarrow}
\newcommand{\phat}{{}^{^\wedge}_p}
\newcommand{\Tor}{\mathsf{Tor}}
\newcommand{\bZ}{\mathbb Z}
\renewcommand{\le}{\leqslant}
\newcommand{\tensor}{\otimes}
\newcommand{\eps}{\epsilon}
\numberwithin{equation}{section}
\theoremstyle{plain}
\newtheorem{lemma}{Lemma}[section]
\newtheorem{theorem}[lemma]{Theorem}
\newtheorem{proposition}[lemma]{Proposition}
\newtheorem{corollary}[lemma]{Corollary}
\theoremstyle{definition}
\newtheorem{hypothesis}[lemma]{Hypothesis}
\theoremstyle{remark} 
\newtheorem{remark}[lemma]{Remark} 
\newtheorem{remarks}[lemma]{Remarks}
\author{Dave Benson} 
\author{John Greenlees}
\title{Massey products in the homology of the loopspace of a
  $p$-completed classifying space:  
finite groups with cyclic Sylow $p$-subgroups}
\begin{document}

\begin{abstract}
Let $G$ be a finite group with cyclic Sylow $p$-subgroup, 
and let $k$ be a field of characteristic $p$. Then
$H^*(BG;k)$ and $H_*(\Omega BG\phat;k)$ are $A_\infty$ algebras whose
structure we determine up to quasi-isomorphism.
\end{abstract}

\maketitle

\section{Introduction}
The general context is that we have a finite group $G$, and a field
$k$ of characteristic $p$. We are interested in the differential
graded cochain algebra $C^*(BG;k)$
and the differential graded algebra $C_*(\Omega (BG\phat);k)$ of chains
on the loop space: these two are Koszul dual to each 
other, and the Eilenberg-Moore and Rothenberg-Steenrod spectral
sequences relate the cohomology ring $H^*(BG;k)$  to the homology 
ring $H_*(\Omega (BG\phat);k)$. Of course if $G$ is a $p$-group, $BG$ is $p$-complete so $\Omega 
(BG\phat)\simeq G$, but in general $H_*(\Omega (BG\phat); k)$ is 
infinite dimensional. Henceforth we will omit the brackets from 
$\Omega (BG\phat)$. \\[1ex]

We consider a simple case where
the two rings are not formal, but we can identify the $A_{\infty}$
structures precisely (see Section \ref{sec:Ainfty} for a brief summary on $A_{\infty}$-algebras). From now on we suppose specifically that  $G$ is a finite group with cyclic Sylow $p$-subgroup $P$, 
and let $BG$ be its classifying space. Then the inclusion of
the Sylow $p$-normaliser $N_G(P) \to G$ and the quotient map $N_G(P)
\to N_G(P)/O_{p'}N_G(P)$
induce mod $p$ cohomology 
equivalences 
\[ B(N_G(P)/O_{p'}N_G(P)) \from BN_G(P) \to BG, \] 
and
hence homotopy equivalences after
$p$-completion 
\[ B(N_G(P)/O_{p'}N_G(P))\phat \xleftarrow{\ \sim\ } BN_G(P)\phat
  \xrightarrow{\ \sim \ } BG\phat. \]
Here, $O_{p'}N_G(P)$ denotes the largest normal $p'$-subgroup of
$N_G(P)$. Thus $N_G(P)/O_{p'}N_G(P)$ is a semidirect product 
$\bZ/p^n\rtimes \bZ/q$, where $q$ is a divisor of $p-1$, and 
$\bZ/q$ acts faithfully as a group of automorphisms of $\bZ/p^n$. In
particular, the isomorphism type of $N_G(P)/O_{p'}N_G(P)$ only depends
on $|P|=p^n$ and the inertial index $q=|N_G(P):C_G(P)|$, and therefore
so does the homotopy type of $BG\phat$. Our main theorem determines
the multiplication maps $m_i$ in the $A_\infty$ structure on 
$H^*(BG;k)$ and $H_*(\Omega (BG\phat);k)$ arising from $C^*(BG;k)$ and
$C_*(\Omega (BG\phat); k)$ respectively. We will suppose from now on
that $p^n>2, q>1$ since the case of a $p$-group is well understood. 

The starting point is the cohomology ring
$$H^*(BG; k)=H^*(B\bZ/p^n; k)^{\bZ/q}=k[x]\otimes \Lambda(t)
\mbox{ with }|x|=-2q, |t|=-2q+1.  $$
There is a preferred generator $t\in
H^1(B\bZ/p^n;k)=\mathrm{Hom}(\bZ/p^n,k)$ and we take $x$ to be the
$n$th Bockstein of $t$.

Before stating our result we should be clear about grading and signs.  

\begin{remark}
\label{rem:deg}
We will be discussing both homology and cohomology, so we should be
explicit that everything is graded homologically, so that
differentials always lower degree. Explicitly, the degree of an
element of $H^i(G;k)$ is $-i$.
\end{remark}

\begin{remark}
\label{rem:signs}
Sign conventions for Massey products and $A_{\infty}$ algebras mean
that a specific sign will enter repeatedly in our statements, so for
brevity we  write 
$$\eps (s)=
\begin{cases}
+1 & s\equiv 0, 1 \mod 4\\
-1 &s\equiv 2, 3 \mod 4\\
\end{cases}. $$
\end{remark}

\begin{theorem}
Let $G$ be a finite group with cyclic Sylow $p$-subgroup $P$ of order 
$p^n$ and inertial index $q$ so that 
$$H^*(BG;k)
=k[x]\otimes \Lambda(t) \mbox{ with } |x|=-2q, |t|=-2q+1 \mbox{ and } \beta_nt=x$$  Up to quasi-isomorphism, the $A_\infty$ structure on $H^*(BG;k)$ is determined by 
\[ m_{p^n}(t,\dots,t)=\eps (p^n) x^{h} \]  
where $h=p^n-(p^n-1)/q$. This implies 
\[ m_{p^n}(x^{j_1}t,\dots,x^{j_{p^n}}t)=\eps (p^n) x^{h+j_1+\dots+j_{p^n}}\]
for all $j_1, \ldots, j_{p^n}\geq 0$.  
 All $m_i$ for $i>2$ on all other
$i$-tuples of monomials give zero.

If $q>1$ and  $p^n\ne 3$ then 
\[ H_*(\Omega BG\phat;k) = k[\tau] \otimes \Lambda(\xi) \mbox{ where }
  |\tau|=2q-2, |\xi|=2q-1. \]
 Up to quasi-isomorphism, the
$A_\infty$ structure is determined  by
\[ m_h(\xi,\dots,\xi )=\eps (h) \tau^{p^n}.  \]
This implies 
\[ m_h(\tau^{j_1}\xi,\dots,\tau^{j_h}\xi)=\eps (h) \tau^{p^n+j_1+\dots+j_h} \]
for all $j_1, \ldots, j_{p^n}\geq 0$.   All $m_i$ for $i>2$ on all other $i$-tuples of monomials give
zero.

If $q>1$ and $p^n=3$ then $q=2$ and  
\[ H_*(\Omega BG\phat;k) = k[\tau,\xi]/(\xi^2+\tau^3), \]
and all $m_i$ are zero for $i>2$. 
\end{theorem}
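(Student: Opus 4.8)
The plan is to obtain the final statement as the degenerate $h=2$ instance of the Koszul-dual computation that underlies the generic homology case. First I would record the arithmetic forced by $p^n=3$: then $p=3$ and $n=1$, and since $q\mid p-1=2$ with $q>1$ we must have $q=2$, so that $N_G(P)/O_{p'}N_G(P)\cong\bZ/3\rtimes\bZ/2\cong S_3$. The exponent is $h=p^n-(p^n-1)/q=3-1=2$, and the output generators sit in degrees $|\tau|=2q-2=2$ and $|\xi|=2q-1=3$, so that $\xi^2$ and $\tau^3$ both lie in degree $6$. This coincidence in degree $6$ is the whole point of the special case.

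Next I would pass from cohomology to homology by the Koszul/Rothenberg--Steenrod duality already invoked in the introduction, computing $H_*(\Omega BG\phat;k)$ as $\Ext_{H^*(BG;k)}(k,k)$ together with its induced $A_\infty$ (Pontryagin/Yoneda) structure. The bar--cobar duality interchanges the generators $x\leftrightarrow\xi$ and $t\leftrightarrow\tau$ and swaps the arity $p^n$ with the exponent $h$: the unique nonzero higher product $m_{p^n}(t,\dots,t)=\eps(p^n)x^h$ of the cohomology algebra (valid also when $p^n=3$, where it reads $m_3(t,t,t)=-x^2$) dualises to the operation $m_h(\xi,\dots,\xi)=\eps(h)\tau^{p^n}$ on homology. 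Up to this point the computation is uniform in $p^n$.

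The crux is that for $p^n=3$ we have $h=2$, so the operation produced by the duality is not a genuine higher product but the ordinary multiplication $m_2$; it must therefore be recorded inside the ring rather than as a Massey product. Thus $m_2(\xi,\xi)=\eps(2)\tau^3=-\tau^3$, i.e.\ $\xi^2=-\tau^3$, giving $H_*(\Omega BG\phat;k)=k[\tau,\xi]/(\xi^2+\tau^3)$. Two features distinguish this from the generic case: the ring has the same Poincar\'e series as $k[\tau]\otimes\Lambda(\xi)$ but a different product, consistent with $A_\infty$ operations not changing dimensions; and since $|\xi|=3$ is odd, the relation $\xi^2\ne0$ shows the Pontryagin ring is not graded-commutative, which is permitted because $H_*(\Omega BG\phat;k)$ is only associative. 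Because the generic computation exhibits $m_h$ as the sole nonvanishing higher multiplication, and here that operation has collapsed into $m_2$, no genuine higher product survives: all $m_i$ with $i>2$ vanish and the algebra is formal.

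The main obstacle is making this collapse rigorous rather than heuristic, on two fronts. First, one must track the Massey-product/$A_\infty$ sign conventions of Remark~\ref{rem:signs} through the duality to confirm the relation is $\xi^2+\tau^3=0$ and not $\xi^2-\tau^3=0$; here $\eps(2)=-1$ is the relevant value. Second, one must verify that the degeneration creates no new higher operations---that the uniform computation genuinely specialises with no extra terms. I would settle this either by carrying out the minimal-model (homological perturbation) computation on the bar construction of $H^*(BG;k)$ directly at $p^n=3$ and reading off that the induced structure on $\Ext$ has only the quadratic term, or by an intrinsic-formality argument showing that the bigraded Hochschild cohomology groups of $R=k[\tau,\xi]/(\xi^2+\tau^3)$ controlling the higher products $m_i$ for $i>2$ vanish (the single relation, in degree $6$, leaves no room for obstructions in the relevant bidegrees). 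The classically known homology of $\Omega BS_3\phat$ provides a useful cross-check on the ring structure.
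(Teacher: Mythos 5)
Your proposal has genuine gaps on all three parts of the theorem, the most serious being that it assumes exactly what the paper spends most of its effort proving. The first assertion --- that $m_{p^n}(t,\dots,t)=\eps(p^n)x^h$ and, crucially, that \emph{all other} higher operations on monomials vanish --- is taken by you as known input (``the unique nonzero higher product''), but it is not free. The paper's key device is an auxiliary internal grading: $kG$ is given a $\bZ[\tfrac{1}{q}]$-grading with $|s|=0$, $|U|=1/q$, making $H^*(BG;k)$ bigraded with $|x|=(-2q,p^n)$, $|t|=(-2q+1,h)$, and Kadeishvili's construction can be chosen to respect this extra grading. Bidegree arithmetic (using $ha-\ell b=1$) then pins down $i=p^n$ as the only possible arity and $(x^{j_1}t,\dots,x^{j_{p^n}}t)$ as the only possible arguments; finally Kraines' theorem on the $p^n$-fold Massey product $\langle t,\dots,t\rangle=-x^h$ together with Lu--Palmieri--Wu--Zhang converts this into the stated $m_{p^n}$. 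With only the single homological grading, as in your write-up, the bookkeeping fails: $m_i(t,\dots,t)$ could a priori be a nonzero power of $x$ for every $i\equiv 1\pmod q$, so no determination of the structure is available.

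Second, your route to the homology statement --- explicitly dualizing the operations under bar--cobar duality so that ``the arity $p^n$ and the exponent $h$ are swapped'' --- is plausible but is the hard way round, and you do not carry it out: computing the induced $A_\infty$ structure, with signs, on the Koszul dual of a \emph{non-formal} algebra is a substantial computation. The paper avoids it entirely: it computes only the ring $H_*(\Omega BG\phat;k)=k[\tau]\otimes\Lambda(\xi)$ via the Eilenberg--Moore spectral sequence (again using the extra grading), proves a general classification theorem saying that under the bigrading hypothesis there are exactly two possible $A_\infty$ structures (formal and non-formal), and then uses Koszul duality only through the weak statement that formality on one side forces formality on the other. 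Third, for $p^n=3$ your idea of specializing the generic computation at $h=2$ cannot work as stated, because the generic derivation itself breaks there: when $h=2$ the ungrading of the Eilenberg--Moore $E^\infty$ page has a multiplicative extension problem ($\xi^2$ and $\tau^3$ share a degree), which is exactly why the paper falls back on an explicit computation (squeezed resolutions for $\Sigma_3$) to obtain $k[\tau,\xi]/(\xi^2+\tau^3)$; the vanishing of the $m_i$ for $i>2$ then follows from the collapsed bigrading ($|\tau|=(2,2)$, $|\xi|=(3,3)$, while $m_i$ has bidegree $(i-2,0)$), rather than from the Hochschild-cohomology or minimal-model arguments you leave as sketches. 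Your arithmetic ($q=2$, $G\cong\Sigma_3$) and sign bookkeeping ($\eps(2)=-1$, so $\xi^2+\tau^3=0$) are correct, and your ``collapse of $m_h$ into $m_2$'' heuristic matches the paper's closing remark, but as it stands the proposal completes the proof of none of the three assertions.
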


\section{The group algebra and its cohomology}

We assume from now on, without loss of generality, 
that $G$ has a normal cyclic Sylow $p$-subgroup
$P=C_G(P)$, with inertial index $q=|G:P|$. We shall assume that $q>1$,
which then forces $p$ to be odd. For notation, let
\[ G=\langle g,s\mid g^{p^n}=1,\ s^q=1,\ 
sgs^{-1}=h^\gamma\rangle\cong \bZ/p^n\rtimes\bZ/q, \]
where $\gamma$ is a primitive $q$th root of unity modulo $p^n$.
Let $P=\langle g\rangle$ and $H=\langle s\rangle$ as subgroups of
$G$. 

Let $k$ be a field of characteristic $p$. 
The action of $H$ on
$kP$ by conjugation preserves the radical series, and since $|H|$ is
not divisible by $p$, there are invariant complements. Thus we may
choose an element $U\in J(kP)$ such that $U$ spans an $H$-invariant
complement of $J^2(kP)$ in $J(kP)$. It can be checked that
\[ U = \sum_{\substack{1\le j \le p^n-1, \\(p,j)=1}} g^j/j \]
is such an element, and that $sUs^{-1}=\gamma U$. This gives us the
following presentation for $kG$:
\[ kG = k\langle s,U \mid U^{p^n}=0,\ s^q=1,\ sU = \gamma Us
  \rangle. \]

We shall regard $kG$ as a $\bZ[\frac{1}{q}]$-graded 
algebra with $|s|=0$ and $|U|=1/q$. Then the bar resolution
is doubly graded, and taking homomorphisms into $k$, the
cochains $C^*(BG;k)$ inherit a double grading. The differential
decreases the homological grading and preserves  the internal
grading. Thus the cohomology $H^*(G,k)=H^*(BG;k)$ is doubly graded:
\[ H^*(BG;k) = k[x] \otimes \Lambda(t) \]
where $|x|=(-2q,p^n)$, $|t|=(-2q+1,h)$, and $h=p^n-(p^n-1)/q$.
Here, the first degree is homological, the second internal.
The Massey product $\langle t,t,\dots,t\rangle$ ($p^n$ repetitions) is
equal to $-x^h$. This may easily be determined by restriction to $P$,
where it is well known that the $p^n$-fold Massey product of the
degree one exterior generator is a non-zero degree two element.
The usual convention is to make the constant $-1$, because
this Massey product is minus the $n$th Bockstein of $t$ \cite[Theorem 14]{Kraines:1966a}.

\section{$A_\infty$-algebras}
\label{sec:Ainfty}

An $A_{\infty}$-algebra over a field is a $\bZ$-graded vector space
$A$ with graded maps $m_n: A^{\otimes n}\rightarrow A$ of degree $n-2$ for
$n\geq 1$ satisfying 
$$\sum_{r+s+t=n}(-1)^{r+st}m_{r+1+t}(id^{\tensor r}\tensor m_s \tensor
id^{\tensor t})=0 $$
for $n\geq 1$. The map $m_1$ is therefore a differential, and the map
$m_2$ induces a product on $H_*(A)$.

A theorem of Kadeishvili~\cite{Kadeishvili:1982a} (see also
Keller~\cite{Keller:2001a,Keller:2002a} or
Merkulov~\cite{Merkulov:1999a}) may be stated as
follows. Suppose that we are given a differential
graded algebra $A$,  over a field $k$. Let $Z^*(A)$ be the cocycles,
$B^*(A)$ be the coboundaries, and $H^*(A)=Z^*(A)/B^*(A)$. 
Choose a vector space splitting $f_1\colon H^*(A) \to Z^*(A) \subseteq A$
of the quotient.
Then this gives by an inductive procedure an $A_\infty$ structure
on $H^*(A)$ so that the map $f_1$ is the degree one part of a quasi-isomorphism of
$A_\infty$-algebras. 

If $A$ happens to carry auxiliary gradings respected by the product
structure and preserved by the differential, then it is easy to check
from the inductive procedure that
the maps in the construction may be chosen so that they also
respect these gradings. It then follows that the structure maps $m_i$
of the $A_\infty$ structure on $H^*(A)$ also respect these gradings.

Let us apply this to $H^*(BG;k)$. We examine the elements $m_i(t,\dots,t)$.
By definition, we have $m_1(t)=0$ and $m_2(t,t)=0$. The degree of 
$m_i(t,\dots,t)$ is $i$ times the degree of $t$, increased in the
homological direction by $i-2$. This gives
\[ |m_i(t,\dots,t)| = i(-2q+1,h)+(i-2,0) =(-2iq+2i-2,ih). \]
The homological degree is even, so if $m_i(t, \cdots , t)$ is non-zero then it is a
multiple of a power of $x$. Comparing degrees, if $m_i(t,\dots,t)$ is
a non-zero multiple of $x^\alpha$ then we have 
\[ 2iq-2i+2=2\alpha q,\qquad ih = \alpha p^n. \] 
Eliminating $\alpha$, we obtain $(iq-i+1)p^n=ihq$. Substituting
$h=p^n-(p^n-1)/q$, this gives $i=p^n$. Finally, since the Massey
product of $p^n$ copies of $t$ is equal to $-x^h$, it follows that
$m_{p^n}(t,\dots,t)=\eps (p^n) x^h$, where the sign is as defined in Remark
\ref{rem:signs} \cite[Theorem 3.1]{LuPalmieriWuZhang:2009}. Thus we have
\[ m_i(t,\dots,t) = \begin{cases} \eps (p^n) x^h & i=p^n \\ 
0 & \text{otherwise.} \end{cases} \]
We shall elaborate on this argument in a more general context in the
next section, where we shall see that the rest of the $A_\infty$
structure is also determined in a similar way.

\section{$A_\infty$ structures on a polynomial tensor exterior algebra}

In this section, we shall examine the following general situation.
Our goal is to establish that there are only two possible $A_\infty$
structures satisfying Hypothesis \ref{hyp:grading} below, and that the Koszul dual also
satisfies the same hypothesis with the roles of $a$ and $b$, and of
$h$ and $\ell$ reversed.

\begin{hypothesis}
\label{hyp:grading}
$A$ is a $\bZ\times\bZ$-graded $A_\infty$-algebra over a field $k$,
where the operators $m_i$ have degree $(i-2,0)$, satisfying
\begin{enumerate}
\item $m_1=0$, so that $m_2$ is strictly associative, 
\item ignoring the $m_i$ with $i>2$, the algebra $A$ is
$k[x] \otimes \Lambda(t)$ where $|x|=(-2a,\ell)$ and $|t|=(-2b-1,h)$, and
\item $ha-\ell b = 1$.
\end{enumerate}
\end{hypothesis}

\begin{remarks}
(i) The $A_\infty$-algebra $H^*(BG;k)$ of the last section satisfies this
hypothesis, with $a=q$, $b=q-1$, $h=p^n-(p^n-1)/q$, $\ell=p^n$.

(ii) By comparing degrees, if we have $m_\ell(t,\dots,t)=\eps (p^n) x^h$ then
$(2b+1)\ell + 2-\ell = 2ah$ and so $ha-\ell b = 1$. This explains the
role of part (3) of the hypothesis. The consequence is, of course,
that $a$ and $b$ are coprime, and so are $h$ and $\ell$.
\end{remarks}

\begin{lemma}
If $m_i(t,\dots,t)$ is non-zero, then $i=\ell > 2$ and $m_\ell(t,\dots,t)$ is
a multiple of $x^h$.
\end{lemma}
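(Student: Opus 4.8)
The plan is to mimic the degree-counting argument already carried out for $H^*(BG;k)$ in Section~\ref{sec:Ainfty}, now phrased abstractly under Hypothesis~\ref{hyp:grading}. First I would record the degree of $m_i(t,\dots,t)$. Since each $m_i$ has degree $(i-2,0)$ and $|t|=(-2b-1,h)$, I compute
\[ |m_i(t,\dots,t)| = i(-2b-1,h)+(i-2,0) = (-2ib-2,\, ih). \]
The homological degree $-2ib-2$ is even. Because $A$ is, as a graded algebra under $m_2$, the algebra $k[x]\otimes\Lambda(t)$ with $|x|=(-2a,\ell)$ and $|t|=(-2b-1,h)$, every basis monomial is of the form $x^\alpha$ (even homological degree) or $x^\alpha t$ (odd homological degree). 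Hence an element in even homological degree lies in the span of the powers $x^\alpha$, so if $m_i(t,\dots,t)$ is non-zero it must be a scalar multiple of some $x^\alpha$, giving the last assertion of the lemma.

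Next I would pin down $i$ by matching both gradings. Equating $|m_i(t,\dots,t)|$ with $|x^\alpha|=(-2a\alpha,\,\ell\alpha)$ yields the two equations
\[ ib+1 = a\alpha, \qquad ih = \ell\alpha. \]
Eliminating $\alpha$ (for instance $\alpha=ih/\ell$ from the second, substituted into the first) gives $\ell(ib+1)=a\cdot ih$, that is $i(ha-\ell b)=\ell$. By part~(3) of Hypothesis~\ref{hyp:grading}, $ha-\ell b=1$, so this collapses to $i=\ell$. Thus the only index $i$ for which $m_i(t,\dots,t)$ can be non-zero is $i=\ell$, and then $\alpha=h$, so $m_\ell(t,\dots,t)$ is a multiple of $x^h$ exactly as claimed.

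Finally I must justify $\ell>2$, so that the statement genuinely concerns a higher $A_\infty$ operation rather than $m_1$ or $m_2$. The cases $i=1$ and $i=2$ are excluded directly from Hypothesis~\ref{hyp:grading}: part~(1) gives $m_1=0$, and the algebra structure is that of $k[x]\otimes\Lambda(t)$ under the strictly associative $m_2$, in which $m_2(t,t)=t^2=0$. Hence $\ell\neq 1,2$, and combined with the coprimality of $h$ and $\ell$ noted in the Remarks, we get $\ell>2$. I do not expect a serious obstacle here: the whole argument is a linear Diophantine bookkeeping exercise, and the one point requiring a little care is simply confirming that the internal grading forces a unique $\alpha$ and that no cancellation among distinct powers of $x$ can occur, which is immediate since the $x^\alpha$ are linearly independent and sit in distinct bidegrees.
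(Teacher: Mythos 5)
Your proof is correct and follows essentially the same degree-counting argument as the paper: compute $|m_i(t,\dots,t)|=(-2ib-2,ih)$, use evenness of homological degree to force a multiple of $x^\alpha$, and eliminate $\alpha$ via $ha-\ell b=1$ to get $i=\ell$ and $\alpha=h$. Your explicit handling of $\ell>2$ (via $m_1=0$ and $m_2(t,t)=t^2=0$) is a point the paper leaves implicit, and it is the right justification --- though note the coprimality of $h$ and $\ell$ plays no role there.
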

\begin{proof}
The argument is the same as in the last section. The degree of
$m_i(t,\dots,t)$ is $i|t| +i- 2  = (-2ib - 2, ih)$. Since the
homological degree is even, if $m_i(t,\dots,t)$ is non-zero then it is
a multiple of some power of $x$, say $x^\alpha$. Then we have
\[ 2ib+2 = 2\alpha a,\qquad ih = \alpha\ell. \]
Eliminating $\alpha$ gives $(ib+1)\ell=iha$, and so 
using $ha-\ell b =1$ we have $i=\ell$. Substituting back gives 
$\alpha = h$.
\end{proof}

Elaborating on this argument gives the entire $A_\infty$ structure.
If $m_\ell(t,\dots,t)$ is non-zero, then by rescaling the variables
$t$ and $x$ if necessary
we can assume that $m_\ell(t,\dots,t)= x^h$ (note that we can even do
this without extending the field, since $\ell$ and $h$ are coprime).

\begin{proposition}
If $m_\ell(t,\dots,t)=0$ then all $m_i$ are zero for $i>2$. If
$m_\ell(t,\dots,t)= x^h$ then
$m_\ell(x^{j_1}t,\dots,x^{j_\ell}t)= x^{h+j_1+\dots+j_\ell}$, 
and all $m_i$ for $i>2$ on all other $i$-tuples of monomials give zero.
\end{proposition}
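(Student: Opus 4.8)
The plan is to settle first, purely by the grading bookkeeping of the preceding Lemma, \emph{which} tuples of monomials can carry a nonzero $m_i$ for $i>2$, and only afterwards to pin down the surviving scalar coefficients using the $A_\infty$ relations.

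\textbf{Step 1 (degree classification).} A monomial basis of $A$ consists of the $x^j$ and the $x^jt$. Given an $i$-tuple of such monomials, let $c$ be the number of entries containing a factor of $t$ and let $J$ be the total exponent of $x$. Writing out the homological and internal degrees of $m_i$ applied to the tuple (using $|x|=(-2a,\ell)$, $|t|=(-2b-1,h)$, and that $m_i$ has degree $(i-2,0)$) and comparing with the degree of a putative output $x^\beta$ or $x^\beta t$, I obtain two linear relations. Using $ha-\ell b=1$ and $\gcd(\ell,h)=1$ exactly as in the Lemma, these force the following. If the output is a power of $x$ then $\ell\mid c$, so $c=\ell m$, and $i-2=m(\ell-2)$; if the output is a power of $x$ times $t$ then $\ell\mid(c-1)$, so $c-1=\ell m$, and again $i-2=m(\ell-2)$. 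Since always $0\le c\le i$ and $\ell>2$, the bound $c\le i$ leaves, for $i>2$, only the first case with $m=1$, i.e.\ $i=\ell$, $c=\ell$: that is, $m_\ell$ applied to an $\ell$-tuple \emph{all} of whose entries have the form $x^{j_r}t$, with output a scalar multiple $\lambda(j_1,\dots,j_\ell)\,x^{h+j_1+\cdots+j_\ell}$. Every other choice of $(i,c)$ with $i>2$ gives zero. This already yields the vanishing of every $m_i$ ($i>2$) on all tuples except these, so it remains only to determine the scalars $\lambda(j_1,\dots,j_\ell)$, knowing that $\lambda(0,\dots,0)$ is $0$ or $1$ according to the hypothesis.

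\textbf{Step 2 (the $A_\infty$ relations).} To relate the $\lambda$'s I feed carefully chosen $(\ell+1)$-tuples into the Stasheff identity for $n=\ell+1$. Throughout, the Step 1 classification shows that any internal $m_s$ with $3\le s\ne\ell$ vanishes on the relevant windows, that any $m_s$ ($s>2$) applied to a window containing a pure power of $x$ vanishes, and that $m_2(x^at,x^bt)=0$ because $t^2=0$; so only a handful of terms survive. First I insert a single $x$ \emph{between} the $r$-th and $(r+1)$-st slots of $(x^{j_1}t,\dots,x^{j_\ell}t)$. Because the $t$-slots are then no longer consecutive, no $s=\ell$ term survives, and only the two $s=2$ terms absorbing $x$ to the left and to the right remain; tracking the signs $(-1)^{r+st}$ gives $\lambda(\dots,j_r+1,j_{r+1},\dots)=\lambda(\dots,j_r,j_{r+1}+1,\dots)$, so $\lambda$ depends only on the total $J=\sum_r j_r$. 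Next I insert the $x$ at the \emph{front}, in $(x,x^{j_1}t,\dots,x^{j_\ell}t)$: now the $\ell$ $t$-slots form a consecutive block, so besides the $s=2$ term absorbing $x$ into the first slot there is exactly one surviving $s=\ell$ boundary term $m_2\bigl(x,m_\ell(x^{j_1}t,\dots,x^{j_\ell}t)\bigr)$, and the identity reads $\lambda(j_1+1,j_2,\dots,j_\ell)=\lambda(j_1,\dots,j_\ell)$: raising the total by one leaves $\lambda$ unchanged.

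\textbf{Step 3 (conclusion).} Combining the two relations, $\lambda$ is constant in all its arguments and equal to $\lambda(0,\dots,0)$, the coefficient of $m_\ell(t,\dots,t)$. Hence if $m_\ell(t,\dots,t)=0$ every $\lambda$ vanishes and all $m_i$ ($i>2$) are zero, while if $m_\ell(t,\dots,t)=x^h$ then $\lambda\equiv 1$, giving $m_\ell(x^{j_1}t,\dots,x^{j_\ell}t)=x^{h+j_1+\cdots+j_\ell}$ and vanishing on every other tuple. I expect the main obstacle to be the clean identification of the surviving terms in Step 2: one must verify that each discarded term really does vanish by the Step 1 count and keep the signs straight. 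The fact that $m_\ell$ is not a priori graded-symmetric is what forces using \emph{both} the interior and the front insertions rather than appealing to any symmetry of the arguments.
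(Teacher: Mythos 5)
Your proof is correct and follows essentially the same route as the paper: the degree bookkeeping in your Step 1 is exactly the paper's argument (your $c$ and $J$ playing the role of the paper's $\beta$ and $\alpha$), and your Steps 2--3 supply, via the $x$-insertion trick, the details behind the single sentence in which the paper merely asserts that the $A_\infty$ identities force all the constant multiples to coincide. Your accounting of the surviving terms and signs in Step 2 checks out (the two $s=2$ terms for an interior insertion, the $s=2$ term plus the boundary term $m_2\bigl(x,m_\ell(\dots)\bigr)$ for the front insertion, everything else killed by the Step 1 classification), so if anything your write-up is more complete than the paper's.
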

\begin{proof}
All monomials live in different degrees, so we do not need to consider
linear combinations of monomials.
Suppose that
$m_i(x^{j_1}t^{\ep_1},\dots,x^{j_i}t^{\ep_i})$ is some constant multiple
of $x^jt^\ep$, 
where each of $\ep_1,\dots,\ep_i,\ep$ is either zero or one. Then
comparing degrees, we have
\[ (j_1+\dots+j_i)|x| + (\ep_1+\dots+\ep_i)|t| + (i-2,0) = j|x| +  \ep|t|. \]
Setting
\[ \alpha=j_1+\dots+j_i-j,\qquad \beta = \ep_1+\dots+\ep_i-\ep \]
we have $\beta \le i$, and
\[ \alpha(-2a,\ell) + \beta(-2b-1,h) + (i-2,0) = 0. \]
Thus
\[ 2\alpha a + 2\beta b + \beta + 2 - i = 0, \qquad \alpha \ell +
  \beta h = 0. \]
Eliminating $\alpha$, we obtain
\[ -2\beta ha + 2\beta \ell b + \beta \ell + 2 \ell - i \ell = 0. \]
Since $ha-\ell b=1$, this gives $\beta = \ell (i-2) /(\ell -2)$.
Combining this with $\beta \le i$ gives $i\le \ell$. If $i<\ell$ then
$\beta$ is not divisible by $\ell$, and so $\alpha \ell + \beta h = 0$
cannot hold. So we have $\beta=i=\ell$, $\ep_1=\dots=\ep_\ell=1$,
$\ep=0$, $\alpha=-h$, and $j=h+j_1+\dots+j_\ell$.
Finally, the identities satisfied by the $m_i$ for an
$A_\infty$ structure show that all the constant multiples have to be
the same, hence all equal to zero or after rescaling, all equal to minus one.
\end{proof}

Combining the lemma with the proposition, we obtain the following.

\begin{theorem}\label{th:Ainfty}
Under the hypothesis above, if $\ell > 2$ then there are two possible $A_\infty$
structures on $A$. 
There is the formal one, where $m_i$ equal to zero for $i>2$, 
and the non-formal one, where  
after replacing $x$ and $t$ by suitable multiples, the only
non-zero $m_i$ with $i>2$ is $m_\ell$, and the only non-zero values on
monomials are given by
\begin{equation*} 
m_\ell(x^{j_1}t,\dots,x^{j_\ell}t)=x^{h+j_1+\dots+j_\ell}. 
\end{equation*}
\end{theorem}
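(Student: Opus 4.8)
The plan is to assemble the classification from the Lemma and the Proposition just proved, organising everything around the single scalar $m_\ell(t,\dots,t)$. First I would record the reduction that makes those two results sufficient: because the bigrading separates the monomials $x^j t^\ep$ into pairwise distinct degrees, every operation $m_i$ with $i>2$ is completely determined by its scalar values on tuples of monomials, and no linear combinations ever enter the analysis. Thus the whole $A_\infty$ structure, beyond the fixed $m_1=0$ and $m_2$, is encoded in these scalars, and it suffices to pin them all down. I would also note here that the hypothesis $\ell>2$ is exactly what keeps the denominator $\ell-2$ appearing in the Proposition nonzero and leaves room for a nontrivial higher product at all.

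Next I would apply the Lemma to the purely exterior tuples: $m_i(t,\dots,t)$ can be nonzero only when $i=\ell$, and then it is a scalar multiple $c\,x^h$. This scalar is the sole free parameter. To normalise it I would use the rescaling observation preceding the Proposition: replacing $t$ by $\mu t$ and $x$ by $\lambda x$ with $\lambda,\mu\in k^\times$ sends $c$ to $c\,\mu^\ell\lambda^{-h}$, and since $\gcd(\ell,h)=1$ one can solve $a\ell-bh=-1$ in integers and take $\mu=c^{a}$, $\lambda=c^{b}$, so that $c$ becomes $1$ using only integer powers of $c$, hence without enlarging $k$. We may therefore assume that $m_\ell(t,\dots,t)$ equals either $0$ or $x^h$.

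These two normalisations are precisely the two cases of the Proposition. If $m_\ell(t,\dots,t)=0$, its first half forces every $m_i$ with $i>2$ to vanish, giving the formal structure; if $m_\ell(t,\dots,t)=x^h$, its second half determines every remaining monomial value, the only nonvanishing operation being $m_\ell(x^{j_1}t,\dots,x^{j_\ell}t)=x^{h+j_1+\dots+j_\ell}$. This exhausts the possibilities, so at most the two listed structures occur. The formal one plainly satisfies the $A_\infty$ relations, and the non-formal one is realised by the $A_\infty$-algebra $H^*(BG;k)$ computed earlier, where $m_{p^n}(t,\dots,t)=\eps(p^n)x^h\neq 0$; hence both genuinely occur.

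The step carrying the real weight, though it lives inside the Proposition that we are entitled to cite, is the use of the defining $A_\infty$ relations to force all the separate scalar multiples to agree, so that fixing the single value $m_\ell(t,\dots,t)$ rigidifies the entire structure rather than leaving an independent constant on each admissible monomial tuple. Were I proving the Proposition from scratch, verifying this coherence through the quadratic relations among the $m_i$ would be the main obstacle; granting the Lemma and the Proposition, the theorem follows purely by the bookkeeping above.
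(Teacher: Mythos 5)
Your proposal is correct and follows essentially the same route as the paper, whose entire proof is to combine the Lemma with the Proposition, using the rescaling remark (valid over $k$ itself since $\gcd(h,\ell)=1$) to normalise $m_\ell(t,\dots,t)$ to $0$ or $x^h$. Your explicit B\'ezout argument for the rescaling and the closing remark on realisability are just expanded versions of what the paper leaves implicit.
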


\begin{theorem}
Let $G=\bZ/p^n \rtimes \bZ/q$ as above, and $k$ a field of
characteristic $p$. Then
the $A_\infty$ structure on $H^*(G,k)$ given by Kadeishvili's theorem
may be taken to be the non-formal possibility named in the above
theorem, with $a=q$, $b=q-1$, $h=p^n-(p^n-1)/q$, $\ell=p^n$.
\end{theorem}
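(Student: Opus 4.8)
The plan is to deduce this theorem directly from Theorem \ref{th:Ainfty} by verifying that $H^*(G,k)$ falls under Hypothesis \ref{hyp:grading} with the stated parameters, and then ruling out the formal alternative. So the whole argument splits into a bookkeeping step (the hypothesis holds) and a non-triviality step (some higher operation is non-zero).

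First I would record the bidegrees computed in Section 2, namely $H^*(G,k)=k[x]\otimes\Lambda(t)$ with $|x|=(-2q,p^n)$ and $|t|=(-2q+1,h)$, so that in the notation of the hypothesis we read off $a=q$, $b=q-1$, $\ell=p^n$, and $h=p^n-(p^n-1)/q$. Part (3) is then a one-line check: $ha=qp^n-(p^n-1)=p^n(q-1)+1$ while $\ell b=p^n(q-1)$, so $ha-\ell b=1$. Since $p$ is odd and $q>1$ we have $\ell=p^n>2$, so Theorem \ref{th:Ainfty} applies and asserts that the structure is either formal or the prescribed non-formal one.

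It therefore remains to exclude formality, and by the Lemma the only operation on tuples of $t$ that can witness this is $m_{p^n}(t,\dots,t)$; I would show it is non-zero. The degree argument of the Lemma simultaneously forces every lower Massey product $\langle t,\dots,t\rangle$ (with fewer than $p^n$ entries) to vanish, so the $p^n$-fold Massey product is strictly defined. Because all lower products vanish, the standard identification of Kadeishvili's top operation with the Massey product, which is \cite[Theorem 3.1]{LuPalmieriWuZhang:2009}, gives $m_{p^n}(t,\dots,t)=\eps(p^n)\langle t,\dots,t\rangle$; combined with the value $\langle t,\dots,t\rangle=-x^h$ from Section 2 this yields $m_{p^n}(t,\dots,t)=\eps(p^n)x^h\neq 0$, and Theorem \ref{th:Ainfty} then pins the structure down as the non-formal one.

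The step that needs genuine care, and that I expect to be the main obstacle, is the non-vanishing of this Massey product. I would establish it by restriction to the Sylow subgroup $P=\langle g\rangle\cong\bZ/p^n$: the restriction map $H^*(G,k)\to H^*(P,k)$ sends $t$ to the degree one exterior generator, and restriction is compatible with Massey products, so it suffices to know the claim downstairs. There the classical computation of Kraines \cite[Theorem 14]{Kraines:1966a} identifies the $p^n$-fold Massey product of that generator with minus the $n$th Bockstein, a non-zero multiple of the polynomial generator; non-vanishing in $H^*(P,k)$ forces non-vanishing in $H^*(G,k)$. The precise sign and normalisation in the final statement then come from Kraines' convention together with the preferred Bockstein-normalised generators, rather than from an arbitrary rescaling, which is why one obtains the exact value $\eps(p^n)x^h$ rather than merely a non-zero multiple.
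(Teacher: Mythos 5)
Your proposal is correct and follows essentially the same route as the paper: verify Hypothesis \ref{hyp:grading} with $a=q$, $b=q-1$, $h=p^n-(p^n-1)/q$, $\ell=p^n$ (the paper's Remark (i)), use the bidegree argument to isolate $m_{p^n}$, compute the $p^n$-fold Massey product $\langle t,\dots,t\rangle=-x^h$ by restriction to $P$ and Kraines' theorem, and convert this to $m_{p^n}(t,\dots,t)=\eps(p^n)x^h$ via \cite[Theorem 3.1]{LuPalmieriWuZhang:2009}, which rules out the formal possibility in Theorem \ref{th:Ainfty}. The paper's stated proof is just the final one-line deduction, with the rest of your chain appearing verbatim in its Sections 2--4, so you have simply assembled the same ingredients in one place.
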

\begin{proof}
Since we have $m_{p^n}(t,\dots,t)=\eps (p^n) x^h$, the formal possibility does
not hold.
\end{proof}

\begin{remark}
Dag Madsen's thesis~\cite{Madsen:2002a} 
has an appendix in which the $A_\infty$ structure
is computed for the cohomology of a truncated polynomial ring, reaching similar
conclusions by more direct methods.
\end{remark}

\section{Loops on $BG\phat$}

In general for a finite group $G$ we have $H^*(BG\phat;k)\cong
H^*(BG;k)=H^*(G,k)$ and $\pi_1(BG\phat)=G/O^p(G)$,
the largest $p$-quotient of $G$. In our case, $G=\bZ/p^n\rtimes\bZ/q$
with $q>1$, we have $G=O^p(G)$ and so $BG\phat$ is simply
connected. So the Eilenberg--Moore spectral sequence converges to the
homology of its loop space:
\[ \Tor_{*,*}^{H^*(G,k)}(k,k) \Rightarrow H_*(\Omega BG\phat;k). \]
The internal grading on $C^*(BG;k)$ gives this spectral sequence a
third grading that is preserved by the differentials, and $H_*(\Omega
BG\phat;k)$ is again doubly graded. Since $H^*(G,k)=k[x] \otimes
\Lambda(t)$ with $|x|=(-2q,p^n)$ and $|t|=(-2q+1,h)$, it follows that
the $E^2$ page of this spectral sequence is
$k[\tau] \otimes \Lambda(\xi)$
where $|\xi|=(-1,2q,p^n)$ and $|\tau|=(-1,2q-1,h)$ 
(recall $h=p^n-(p^n-1)/q$). Provided that we are not in the case
$h=2$, which only happens if 
$p^n=3$, ungrading $E^\infty$ gives
\[ H_*(\Omega BG\phat;k) = k[\tau] \otimes \Lambda(\xi) \]
with $|\tau|=(2q-2,h)$ and $|\xi|=(2q-1,p^n)$. 

In the exceptional case $h=2$, $p^n=3$, we have $q=2$, and
the group $G$ is the symmetric group $\Sigma_3$ of degree
three. An explicit computation (for example by squeezed resolutions 
\cite{Benson:2009b}) gives
\[ H_*(\Omega (B\Sigma_3){}^{^\wedge}_3;k) = k[\tau,\xi]/(\xi^2+\tau^3) \]
with $|\tau|=(2,2)$ and $|\xi|=(3,3)$, and the two gradings collapse
to a single grading. 

Applying
Theorem~\ref{th:Ainfty}, and using the fact that either formal case is
Koszul dual to the other, we have the following.

\begin{theorem}
Suppose that $p^n\ne 3$. Then
the $A_\infty$ structure on $H_*(\Omega
BG\phat;k)=k[\tau]\otimes\Lambda(\xi)$ is given by
\[ m_h(\tau^{j_1}\xi,\dots,\tau^{j_h}\xi)=\eps (h) \tau^{p^n+j_1+\dots+j_h}, \]
and for $i>2$, all $m_i$ on all other $i$-tuples of monomials give zero.\qed
\end{theorem}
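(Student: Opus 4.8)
The plan is to obtain this from the classification in Theorem~\ref{th:Ainfty} together with Koszul duality, exactly along the lines of the sentence preceding the statement; the real work is to check that each ingredient applies and to pin down the sign $\eps(h)$. First I would note that $H_*(\Omega BG\phat;k)=k[\tau]\otimes\Lambda(\xi)$, with the bigrading $|\tau|=(2q-2,h)$ and $|\xi|=(2q-1,p^n)$ read off above from the Eilenberg--Moore spectral sequence, is again of the polynomial-tensor-exterior form analysed in Section~4, now with $\tau$ as the polynomial generator and $\xi$ as the exterior one — so the roles of the two generators, and hence of the pairs $(a,b)$ and $(h,\ell)$, are interchanged relative to $H^*(BG;k)$. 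The degree-counting of the Lemma then applies in these swapped roles: if $m_i(\xi,\dots,\xi)$ is nonzero it is a multiple of some $\tau^\alpha$, and matching the two gradings gives $qi-1=\alpha(q-1)$ and $ip^n=\alpha h$; eliminating $\alpha$ and using the relation $ha-\ell b=1$ (that is, $qh=p^n(q-1)+1$) forces $i=h$ and $\alpha=p^n$. The proof of Theorem~\ref{th:Ainfty} goes through in the same way provided the operation length exceeds $2$, i.e.\ $h>2$; since $h=p^n-(p^n-1)/q$ this is precisely the standing assumption $p^n\ne3$. I therefore conclude that there are exactly two $A_\infty$ structures: the formal one, and the non-formal one in which, after rescaling, $m_h(\tau^{j_1}\xi,\dots,\tau^{j_h}\xi)=\tau^{p^n+j_1+\dots+j_h}$ and all other higher products vanish — the Proposition of Section~4 supplying the full list of monomial values from the single datum $m_h(\xi,\dots,\xi)$.

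To decide between the two alternatives I would invoke Koszul duality between the $A_\infty$-algebras $C_*(\Omega BG\phat;k)$ and $C^*(BG;k)$, the bar--cobar duality underlying the Eilenberg--Moore and Rothenberg--Steenrod spectral sequences. The key formal input is that this duality reflects formality: the Koszul dual of a formal algebra of this type is again formal, so if $H_*(\Omega BG\phat;k)$ were formal then so would be its dual $H^*(BG;k)$. But Section~3 shows $H^*(BG;k)$ is non-formal, since $m_{p^n}(t,\dots,t)=\eps(p^n)x^h\ne0$. Hence $H_*(\Omega BG\phat;k)$ is non-formal and must be the second structure above, which gives the shape of the asserted formula and the vanishing of every other $m_i$.

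The step I expect to be the main obstacle is fixing the scalar exactly to $\eps(h)$ rather than to an unspecified nonzero constant. Because $\tau$ and $\xi$ are pinned down as the Koszul duals of $x$ and $t$, there is no rescaling freedom left with which to normalise the coefficient away, so it must genuinely be computed. I would mirror Section~3: the relevant operation is the $h$-fold product $m_h$, so the comparison of $m_h$ with the $h$-fold Massey product $\langle\xi,\dots,\xi\rangle$ \cite{LuPalmieriWuZhang:2009} contributes precisely the sign $\eps(h)$ of Remark~\ref{rem:signs}, once one knows this Massey product equals $-\tau^{p^n}$, the exact analogue of Kraines' value $\langle t,\dots,t\rangle=-x^h$ \cite{Kraines:1966a}. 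Establishing that normalisation is the delicate part: unlike the cohomological case there is no Sylow subgroup to restrict to, so rather than compute directly in the infinite-dimensional loop-space homology I would transport the Massey product across the bar--cobar equivalence, verifying that it carries the $p^n$-fold product on $H^*(BG;k)$ to the $h$-fold product on $H_*(\Omega BG\phat;k)$ with the sign dictated by the duality, so that the coefficient emerges as $\eps(h)$ and not $-\eps(h)$.
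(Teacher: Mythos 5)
Your proposal is correct and is essentially the paper's own proof: the bigraded ring coming out of the Eilenberg--Moore spectral sequence satisfies Hypothesis~\ref{hyp:grading} with the roles of the two generators (hence of $(a,b)$ and $(h,\ell)$) interchanged, Theorem~\ref{th:Ainfty} then gives the formal/non-formal dichotomy (using $h>2$, equivalently $p^n\ne 3$), and Koszul duality --- the formal structure on one side being dual to the formal structure on the other --- excludes formality because $m_{p^n}(t,\dots,t)=\eps(p^n)x^h\ne 0$. Your third paragraph is the only divergence, and it rests on a false premise: there \emph{is} rescaling freedom (the theorem, like the one in the introduction, is asserted up to quasi-isomorphism, and Theorem~\ref{th:Ainfty} explicitly permits replacing generators by multiples), and since $h$ and $p^n$ are coprime this rescaling can turn any nonzero coefficient of $m_h(\xi,\dots,\xi)$ into $\eps(h)$ without extending the field, that particular value being chosen purely so that, via the sign conventions of \cite{LuPalmieriWuZhang:2009}, the ensuing corollary reads $\langle\xi,\dots,\xi\rangle=-\tau^{p^n}$ in analogy with Kraines' theorem --- so nothing needs to be ``genuinely computed.''
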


Using \cite{LuPalmieriWuZhang:2009} again, we
may reinterpret this in terms of Massey products. 

\begin{corollary}
In $H_*(\Omega BG\phat)$,
the Massey products $\langle \xi,\dots,\xi \rangle$ ($i$ times) vanish
for $0<i<h$, and give $-\tau^{p^n}$ for $i=h$.\qed
\end{corollary}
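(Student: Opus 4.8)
The plan is to deduce the corollary from the preceding theorem by translating the higher $A_\infty$ products into Massey products, exactly the move made earlier to pass between $m_{p^n}(t,\dots,t)=\eps(p^n)x^h$ and $\langle t,\dots,t\rangle=-x^h$ in $H^*(BG;k)$. The tool is the comparison result of \cite{LuPalmieriWuZhang:2009}: in a minimal $A_\infty$-algebra (here $m_1=0$), once the relevant lower products vanish the Massey product $\langle a_1,\dots,a_n\rangle$ is defined and is given, up to an explicit sign depending on the degrees of the $a_j$, by $m_n(a_1,\dots,a_n)$.

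First I would read off from the theorem that in $H_*(\Omega BG\phat;k)=k[\tau]\otimes\Lambda(\xi)$ the only nonzero higher product on a string of $\xi$'s is $m_h$: one has $m_2(\xi,\xi)=\xi^2=0$ as $\xi$ is exterior, $m_i(\xi,\dots,\xi)=0$ for $2<i<h$, and $m_h(\xi,\dots,\xi)=\eps(h)\tau^{p^n}$. I would then run an induction on $i$. For $2\le i<h$ all proper subproducts of shorter length vanish, so by \cite{LuPalmieriWuZhang:2009} the $i$-fold product $\langle\xi,\dots,\xi\rangle$ is defined and equals $\pm m_i(\xi,\dots,\xi)=0$; moreover its indeterminacy, which is spanned by products of two strictly shorter Massey products of $\xi$'s of complementary lengths, is zero, since in each such product at least one factor has length in the range where the inductive hypothesis already gives vanishing. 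Thus each of these Massey products is the single element $0$, which settles the range $0<i<h$.

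At the top length $i=h$ the same input applies: all shorter products on $\xi$'s vanish, so $\langle\xi,\dots,\xi\rangle$ ($h$ factors) is defined with zero indeterminacy and equals the degree-dependent sign from \cite{LuPalmieriWuZhang:2009} times $m_h(\xi,\dots,\xi)=\eps(h)\tau^{p^n}$. It remains to check that this sign combines with $\eps(h)$ to produce $-\tau^{p^n}$.

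The one genuine obstacle is this sign. I would resolve it exactly as in the cohomological case rather than by recomputing from scratch. The homological degree $|\xi|=2q-1$ is odd, so when the sign of \cite{LuPalmieriWuZhang:2009} is evaluated on $s$ equal factors of odd degree I expect it to reduce to $(-1)^{s(s-1)/2}$ up to a universal sign independent of $s$, and $s(s-1)/2$ is odd precisely when $s\equiv 2,3 \ \mathrm{mod}\ 4$; this is the reason the function $\eps$ of Remark~\ref{rem:signs} was introduced, and it is exactly what converted $m_{p^n}(t,\dots,t)=\eps(p^n)x^h$ into $\langle t,\dots,t\rangle=-x^h$ there, where $t$ also has odd degree. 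Since $\xi$ has odd degree as well, the identical computation with $s=h$ factors turns $m_h(\xi,\dots,\xi)=\eps(h)\tau^{p^n}$ into $\langle\xi,\dots,\xi\rangle=-\tau^{p^n}$, completing the proof. I expect the only real risk to be an off-by-a-sign in transcribing the \cite{LuPalmieriWuZhang:2009} convention, which I would pin down by demanding agreement with the already established normalization $-x^h$.
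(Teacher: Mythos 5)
Your proposal is correct and follows essentially the same route as the paper: the paper's proof is precisely the one-line appeal to \cite{LuPalmieriWuZhang:2009} to reinterpret the $A_\infty$ products $m_i(\xi,\dots,\xi)$ from the preceding theorem as Massey products, with the sign handled by the same $\eps$ convention already calibrated against Kraines' normalization $\langle t,\dots,t\rangle=-x^h$ in the cohomological case. Your added details (the induction giving definedness for $i<h$ and the sign bookkeeping $(-1)^{s(s-1)/2}=\eps(s)$ for odd-degree factors) simply make explicit what the paper leaves implicit.
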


Note that the exceptional case $p^n=3$ also fits the corollary, if we
interpret a $2$-fold Massey product as an ordinary product.


\newcommand{\noopsort}[1]{}
\providecommand{\bysame}{\leavevmode\hbox to3em{\hrulefill}\thinspace}
\providecommand{\MR}{\relax\ifhmode\unskip\space\fi MR }
\providecommand{\MRhref}[2]{%
  \href{http://www.ams.org/mathscinet-getitem?mr=#1}{#2}
}
\providecommand{\href}[2]{#2}

\end{document}